\documentclass[11pt,a4paper]{amsart}


\usepackage{graphicx}
\usepackage[font=small,labelfont=bf]{caption}

\usepackage{amssymb}


\addtolength{\hoffset}{-1cm}
\addtolength{\textwidth}{2cm}

\newtheorem{theorem}{Theorem}[section]
\newtheorem{lemma}[theorem]{Lemma}
\newtheorem{proposition}[theorem]{Proposition}

\newtheorem{thmdf}[theorem]{Theorem-Definition}
\theoremstyle{definition}
\newtheorem{definition}[theorem]{Definition}
\theoremstyle{remark}
\newtheorem{remark}[theorem]{Remark}
\newtheorem*{example}{Example}

\newcommand{\N}{\mathbb{N}}

\newcommand{\Q}{\mathbb{Q}}

\newcommand{\A}{\mathbb{A}}

\def\co{\colon\thinspace}

\begin{document}

\title[Computing local intersection multiplicity]
{Computing local intersection multiplicity of plane curves via blowup}


\author{Jana Chalmoviansk\'a, Pavel Chalmoviansk\'y}
\thanks{This work was supported by the Slovak Research and Development Agency 
under the contract No.~APVV-16-0053}
\address{Faculty of Mathematics, Physics and Computer Science,
         Comenius University, Bratislava, Slovakia}
\email{jana.chalmovianska@fmph.uniba.sk, pavel.chalmoviansky@fmph.uniba.sk}

\begin{abstract}
  We prove that intersection multiplicity of two plane curves defined
  by Fulton's axioms is equivalent to the multiplicity computed using blowup.
  The algorithm based on the latter is presented and 
  its complexity is estimated.
  We compute for polynomials over $\Q$ and its algebraic extensions.
\end{abstract}

\maketitle


\section{Introduction}

The classical result in algebraic geometry on plane curves, 
B\'ezout's theorem, states that the number of intersections of two 
curves with no common component equals to the product of their degrees 
provided 
\begin{itemize}
  \item the curves are defined in projective plane,
  \item the intersections are computed over an algebraically closed field,
  \item each intersection point is counted with proper multiplicity.
\end{itemize}
The most obscure part is the computation of the multiplicity 
of the intersection in a particular point,
which is generally a challenging task from both computational
and interpretation point of view (\cite{flenner_carroll_vogel, boda_schenzel}), 
i.e.~its intersection number. 

During the development of the subject, 
several definitions of the intersection number for two curves
in a given point were formulated. Nowadays, the definition by~\cite{fulton} 
is probably the most known and accepted.
We cite it in Section~\ref{se:intersection_number}. 
The definition gives already an algorithm for computing 
the intersection number
and it was implemented in Magma 
by~\cite{hilmar_smyth}. 
Their algorithm lists all points of
intersection of two algebraic curves, together with their multiplicities.

In~\cite{wall}, the intersection number of two curves in a given point
is described as the number of intersections that appear instead of
the given one after we wiggle the curves a little bit. If one of the
curves is a line (or more generally a rational curve), 
the intersection number can be easily computed: 
if $(\varphi(t),\psi(t))$ is a parametrization of one curve, 
we plug it into the polynomial $g$ defining the other curve
and then the multiplicity of intersection in the point 
$(\varphi(t_0),\psi(t_0))$ is the multiplicity of the root $t_0$ 
in the equation $g(\varphi(t),\psi(t)) = 0$.
In case none of the curves is rational, 
the parametrization of branches by Puiseux series can be used.

Alternatively, the intersection multiplicity of two curves can 
be computed using resultants (\cite{gibson,walker}). 
This is proven to be equivalent to the intersection number 
given by Fulton~(\cite{sendra_winkler}).

The geometric meaning of intersection multiplicity is expressed
by relating it to the infinitely near points of the curve.
For example, in a point in common for two curves,
sharing a first order infinitely near point 
corresponds to sharing a tangent line 
and sharing also a second order infinitely near point 
corresponds to sharing an osculating circle.
The connection between the intersection number and the shared
infinitely near points is well studied in~\cite{wall} using
Puiseux series
or in~\cite{zariski} using valuations.

The infinitely near points are looked for 
using birational morphism of the plane called blowup, 
which we briefly explain in Section~\ref{se:blowup}.
In the paper, we give a proof that the number computed by counting 
the shared infinitely near points with their multiplicities
is the same as the intersection number defined by Fulton,
and this is the main result of the paper:

\noindent{\rm\textbf{Theorem \ref{thm:main}.}}
{\it  Let $f,g\in k[x,y]$ be non-constant polynomials
  and $P\in\A^2(k)$ be a point.
  Then $$\mathcal{B}_P(f,g) = I_P(f,g),$$
  where $\mathcal{B}_P(f,g)$ is the intersection number computed  using 
  infinitely near points at $P$ common for curves defined by $f$ and $g$,
  and $I_P(f,g)$ is the intersection number of the curves 
  defined by Fulton.}

So we can use the infinitely near points 
when computing the intersection number.
When comparing with the algorithm by~\cite{hilmar_smyth},
the proposed algorithm  
computes the intersection multiplicity only in one point. 
But the tests show that in case the intersection multiplicity 
in the given point is high 
(i.e. the point is singular for one or both curves, 
or the curves share more geometric invariants in the point), 
or in case the curves themselves are of high degree,
our algorithm turns out to be more effective.
The performance of the algorithm is discussed at the end of the paper.

\section{Preliminaries}
\subsection{Notation and terminology}

The assertions in the paper are proven under the assumptions that
the field $k$ is algebraically closed and its characteristic is 0.

In the paper, we work in the affine plane over the field $k$.
A curve in $\A^2$ is defined by 
a single non-constant polynomial from $k[x,y]$.
A curve defined by a polynomial $f$ is denoted $C_f$.
The {\em points on/of a curve $C_f$} are 
all roots of $f$ with coordinates in $k$.
The set of all points of a curve $C_f$ we denote by $V(f)$.
We will distinguish e.g. a curve defined by $2y - x^2$ and 
a curve defined by $(2y - x^2)^2$, but we will not distinguish
curves defined by $2y - x^2$ and $3x^2 - 6y$. 
So there is a bijection between plane curves defined over $k$
and proper principal ideals $(f)$ in $k[x,y]$.

Let $C_f$ be a plane curve and let $P = (p_x,p_y)\in\A^2$ be a point.
We can write 
$$f = a_{00} + a_{10}(x-p_x) + a_{01}(x-p_y) + 
      a_{20}(x-p_x)^2 + a_{11}(x-p_x)(x-p_y) + a_{02}(x-p_y)^2 + \dots$$
as a polynomial in $x-p_x$, $y-p_y$ (the Taylor series of $f$ at $P$).
Then the {\em multiplicity of $C_f$ at $P$}
is the degree of the lowest term with non-zero coefficients 
of the Taylor series of $f$ at $P$ with nonzero coefficient, 
we denote it by $m_P(C_f)$ or $m_P(f)$.
In case $P = (0,0)$, we may shorten the notation as $m(C_f)$ or $m(f)$.

If $\varphi\co\A^2\to\A^2$ is an affine transform of the plane,
then $f^\varphi$ denotes the polynomial $f\circ\varphi^{-1}$ 
i.e. $f^\varphi$ is a polynomial describing 
the transform of the curve $C_f$. 
In particular, the points of $C_{f^\varphi}$ are exactly 
the images of the points of $C_f$ under the transform $\varphi$:
$$\varphi(P)\in V(f^\varphi)\quad\text{if and only if}\quad P\in V(f).$$

\subsection{Blowup}~\label{se:blowup}

Blowing up~(\cite{shafarevich, cutkosky}) 
is a powerful tool for resolving singularities of plane curves.
However a curve can be blown up also in a regular point and we use it for
computing the intersection multiplicity of the curves.

When blowing up the affine plane $\A^2$ in a point $P=(p_1,p_2)$, 
we construct a surface $X\subset\A^2\times\mathbb{P}^1$ 
given by $X = V((x-p_1)t_1-(y-p_2)t_0)$,
where $(x,y)$ are the coordinates in $\A^2$ and $(t_0\co t_1)$ are the homogeneous
coordinates in $\mathbb{P}^1$. {\em The blowup} of $\A^2$ is the surface $X$
together with the birational morphism 
$$\pi\co X\to\A^2,\quad (x,y;t_0\colon t_1)\mapsto(x,y).$$

If $C_f$ is a curve in $\A^2$, 
its blowup is the preimage of $C_f$ under $\pi$, 
i.e. it is the curve on $X$ defined by the polynomial $\pi^*f = f\circ\pi$.
The blowup of $C_f$ consists of {\em the strict transform}, 
which geometrically is the closure of $\pi^{-1}(V(f)\setminus\{P\})$ 
and algebraically it is the curve on $X$ defined 
by the saturation ideal $(\pi^*f):(x-p_1,y-p_2)^\infty$,
and {\em the exceptional line}, which is the preimage of $P$.
We denote the strict transform of $C_f$ by $C_f^\prime$.
The points where the strict transform $C_f^\prime$ meets the exceptional line
are the {\em first order infinitely near points} 
of the curve $C_f$ at the point $P$.

Let the curves $C_f$ and $C_g$ both pass through a point $P$ 
and let us consider their blowups.
Each first order infinitely near point at $P$ shared by both curves
corresponds to a tangent at $P$ that the curves $C_f$ and $C_g$ 
have in common.

When working with the strict transform of a curve $C_f$
contained in $X$,
we pass to an affine chart of $X$ isomorphic to $\A^2$, 
for example the chart with $t_0\ne 0$. 
Then the strict transform $C_f^\prime$ is described by one
polynomial and we will denote it $f^\prime$,
so locally $C_f^\prime = C_{f^\prime}$.

\begin{proposition}\label{prop:isomorphic_blowups}
  Let $\varphi$ be an affine change of coordinates,
  which takes the curve $C_f$ to a curve $C_g$.
  Then $\varphi$ induces a linear map that takes
  the strict transform of $C_f$ after blowing up in $P\in V(f)$
  to the strict transform of $C_g$ after blowing up in $\varphi(P)$.
\end{proposition}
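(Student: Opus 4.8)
The plan is to build the induced map explicitly from the affine data of $\varphi$ and then check by a single substitution that it carries the defining equation of one blowup to the other. Write $\varphi(x,y)=A\binom{x}{y}+\beta$ with $A$ an invertible $2\times 2$ matrix and $\beta\in k^2$, and set $Q=\varphi(P)$. Geometrically $\varphi$ sends the line through $P$ with direction vector $v$ to the line through $Q$ with direction $Av$, so the natural candidate for the induced map is
\[
\tilde\varphi\co\A^2\times\P^1\to\A^2\times\P^1,\qquad (x,y;t_0\co t_1)\mapsto\bigl(\varphi(x,y);\,\P(A)(t_0\co t_1)\bigr),
\]
where $\P(A)$ is the projective automorphism of $\P^1$ induced by $A$. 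This map is affine on the first factor and projective-linear on the second, hence an isomorphism of $\A^2\times\P^1$, and by construction it is compatible with the projections: $\pi_Q\circ\tilde\varphi=\varphi\circ\pi_P$, where $\pi_P$ and $\pi_Q$ denote the blowup maps at $P$ and $Q$.

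First I would verify that $\tilde\varphi$ restricts to an isomorphism $X_P\to X_Q$ of the two blowup surfaces. Writing $Q=(q_1,q_2)$, the shifted coordinates of $\varphi(x,y)$ satisfy $(x'-q_1,\,y'-q_2)=A\,(x-p_1,\,y-p_2)$, while $\P(A)$ applies the same matrix to $(t_0,t_1)$. Substituting these into the defining polynomial $(x-q_1)t_1-(y-q_2)t_0$ of $X_Q$ and expanding, the cross terms cancel and one is left with exactly
\[
\det(A)\bigl[(x-p_1)t_1-(y-p_2)t_0\bigr].
\]
Thus the pullback of the equation of $X_Q$ is $\det(A)\ne 0$ times the equation of $X_P$, so $\tilde\varphi$ maps $X_P$ into $X_Q$, and invertibility upgrades this to an isomorphism onto $X_Q$. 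Since $\pi_Q\circ\tilde\varphi=\varphi\circ\pi_P$, the map $\tilde\varphi$ sends the fiber $\pi_P^{-1}(P)$ to $\pi_Q^{-1}(Q)$, hence carries the exceptional line $E_P$ isomorphically onto $E_Q$ and restricts to an isomorphism $X_P\setminus E_P\to X_Q\setminus E_Q$ identified under $\pi$ with $\varphi\co\A^2\setminus\{P\}\to\A^2\setminus\{Q\}$.

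It then remains to match the transforms of the curves themselves. From $C_g=C_{f^\varphi}$ we have $f=g\circ\varphi$ up to a nonzero scalar, so using $\pi_Q\circ\tilde\varphi=\varphi\circ\pi_P$,
\[
\tilde\varphi^*(\pi_Q^*g)=(\varphi\circ\pi_P)^*g=\pi_P^*(g\circ\varphi)=\pi_P^*f
\]
up to a scalar; hence $\tilde\varphi$ carries the total transform $V(\pi_P^*f)$ of $C_f$ onto the total transform of $C_g$. The one point that needs care is passing from the total to the strict transform: because $\tilde\varphi$ is an isomorphism sending $E_P$ onto $E_Q$ and inducing $\varphi$ on the complements, it maps $\pi_P^{-1}\bigl(V(f)\setminus\{P\}\bigr)$ onto $\pi_Q^{-1}\bigl(V(g)\setminus\{Q\}\bigr)$, and taking closures shows $C_f'$ maps onto $C_g'$, as claimed. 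I expect the only real work to be (i) pinning down the correct map on the $\P^1$ factor, namely the projectivization of the linear part of $\varphi$, and (ii) this last upgrade from the total to the strict transform, which is exactly where the matching $\tilde\varphi(E_P)=E_Q$ is used; the substitution producing the factor $\det(A)$ is then a routine global identity.
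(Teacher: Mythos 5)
Your proof is correct, and in fact the paper states this proposition without any proof at all, so there is nothing to compare it against; your explicit construction is the standard argument and is exactly what the paper implicitly relies on in Remark~\ref{rem:isomorphic_blowups}. The determinant identity is right: writing the defining equation of $X_P$ as $\det\bigl(\begin{smallmatrix}x-p_1 & t_0\\ y-p_2 & t_1\end{smallmatrix}\bigr)$, applying $A$ to both columns multiplies the determinant by $\det(A)$, which is your cancellation of cross terms; and the passage from total to strict transform via $\tilde\varphi(E_P)=E_Q$ and taking closures is the one genuinely non-trivial step, which you handle. Two minor remarks. First, since the paper distinguishes curves such as $C_{2y-x^2}$ and $C_{(2y-x^2)^2}$, one should note that $\tilde\varphi$ also matches the strict transforms as divisors (with multiplicities); this follows from your identity $\tilde\varphi^*(\pi_Q^*g)=\pi_P^*f$ together with the fact that $\tilde\varphi^*$ carries the ideal of $E_Q$ to that of $E_P$, hence identifies the saturations $(\pi^*g):(x-q_1,y-q_2)^\infty$ and $(\pi^*f):(x-p_1,y-p_2)^\infty$ used in the paper's algebraic definition. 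Second, the induced map in an affine chart $z=t_1/t_0$ is a fractional-linear, not affine, change of the chart coordinate (it is linear only on the homogeneous coordinates of the $\P^1$ factor and may move points between charts); this is a looseness in the proposition's wording rather than a defect of your argument, but it is worth being aware of when the result is invoked to normalize $P=(0,0)$ and the tangent directions.
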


\begin{remark}\label{rem:isomorphic_blowups}
  When blowing up a curve $C_f$ at $P$, 
  by Proposition~\ref{prop:isomorphic_blowups}, we may assume $P = (0,0)$. 
  Then we can write 
  \begin{equation}\label{eq:curve_taylor}
    f = F + f_1 
  \end{equation}
  with $F$ being the form of degree $m(f)$
  and $f_1$ containing the terms in $f$ of higher degree.
  The tangents of $C_f$ at $P=(0,0)$ correspond 
  to the linear factors of $F$.
  The blowup of $C_f$ lays on the surface 
  $X = V(xt_1-yt_0)\subset\A^2\times\mathbb{P}^1$.
  By Proposition~\ref{prop:isomorphic_blowups}, 
  we may also assume that the $y$-axis is not tangent 
  to $C_f$ at $(0,0)$, 
  so $x$ is not a factor of $F$ in~(\ref{eq:curve_taylor}).
  Then all first order infinitely near points  
  of $C_f$ at $(0,0)$ 
  are contained in the affine chart of $X$ with $t_0\ne 0$.
  Hence, the blowup of $C_f$ is locally described by
  \begin{equation}\label{eq:blowup_curve}
    f(x,xz) = x^{m(f)}(F(1,z) + xf_2(x,z))\quad
    \textrm{ for some }\quad f_2\in k[x,z],
  \end{equation}
  where $z=t_1/t_0$. 
  As usually done, we replace $z$ by $y$ in~(\ref{eq:blowup_curve}), 
  so $x^{m(f)}$ corresponds to the exceptional line 
  including its multiplicity $m(f)$ and 
  $$f^\prime(x,y) = F(1,y) + xf_2(x,y)$$ 
  is a polynomial defining locally the strict transform $C_f^\prime$
  with all exceptional points of $C_f$ at $(0,0)$ contained 
  in the considered affine chart.
\end{remark}

Let $C_f$ be a curve and $P\in V(f)$ be a point. 
After blowing up $C_f$ at $P$,
we obtain $C_f^\prime$ containing the first order infinitely near points
of $C_f$ at $P$. We can continue blowing up $C_f^\prime$
at such a point and obtain a transform of $C_f^\prime$
containing the first order infinitely near points of $C_f^\prime$ 
at the considered point, 
hence they are the {\em second order infinitely near points} of $C_f$ at $P$. 
Following the pattern, we define the {\em infinitely near points} 
of $C_f$ at $P$ {\em of order $r$} for any $r\in\N^+=\N\setminus\{0\}$.
In this way, we actually obtain a rooted tree of infinitely near points 
of $C_f$ at $P$. The root corresponds to the point $P$ and all other 
vertices correspond to infinitely near points of $C_f$ at $P$,
where the direct descendants corresponds to all the different first order 
infinitely near points of the considered point.
The tree is called the {\em configuration} of infinitely near points 
of $C_f$ at $P$.
By the {\em length of the configuration} we refer
to the number of consecutive blowups 
computed in order to find the configuration.

\begin{proposition}\label{prop:blowup_reg}
  Let $C_f,C_g$ be plane curves and
  let $P$ be a regular point of both $C_f$ and $C_g$.
  If for any $n\in\N^+$ the configurations 
  of infinitely near points of length $n$
  of $C_f$ and $C_g$ coincide, 
  then $C_f$ and $C_g$ share a common component through $P$.
\end{proposition}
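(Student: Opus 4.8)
The plan is to show that the common configuration of infinitely near points forces the two curves to share a branch at $P$, and then that a shared branch forces a shared component. By Proposition~\ref{prop:isomorphic_blowups} and the normalization in Remark~\ref{rem:isomorphic_blowups} I would first assume $P=(0,0)$. Since $P$ is regular on both curves, $m_P(f)=m_P(g)=1$, so exactly one irreducible factor of each of $f$ and $g$ vanishes at $P$ (if two distinct factors vanished, or one vanished to order $\ge 2$, the multiplicity would exceed $1$); call these reduced components $C_{f_1}$ and $C_{g_1}$, each smooth at $P$, and note that $f$ and $f_1$, respectively $g$ and $g_1$, define the same branch at $P$. After an affine change making the $y$-axis non-tangent, i.e.\ $\partial f/\partial y(P)\ne 0$, the formal implicit function theorem presents the branch of $C_{f_1}$ as a power series $y=\phi_f(x)\in k[[x]]$ with $\phi_f(0)=0$, and similarly $y=\phi_g(x)$. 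Because the multiplicity is $1$, the form $F(1,z)$ in~(\ref{eq:blowup_curve}) is linear with a single root, so each configuration is a chain carrying one infinitely near point at every order.

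The main technical step I would carry out is to read off the coefficients of $\phi_f$ from this chain. Writing $\phi_f(x)=a_1x+a_2x^2+\cdots$, the substitution $y=xz$ of Remark~\ref{rem:isomorphic_blowups} turns the branch into $z=\phi_f(x)/x=a_1+a_2x+\cdots$, so the unique first order infinitely near point is $(0,a_1)$, with $a_1$ the tangent slope. Shifting this point to the origin and repeating, the $r$-th blowup strips off $a_1,\dots,a_{r-1}$ and places the $r$-th order infinitely near point at the value $a_r$. Hence the $r$-th order infinitely near point of $C_f$ both is determined by and determines the truncation $(a_1,\dots,a_r)$, and likewise for $C_g$. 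The hypothesis that the configurations of every length $n$ coincide then yields $\phi_f=\phi_g$ in $k[[x]]$, so $C_f$ and $C_g$ share the branch $y=\phi(x)$ at $P$.

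Finally I would upgrade the shared branch to a shared component. Setting $\phi=\phi_f=\phi_g$, consider the evaluation homomorphism $\psi\co k[x,y]\to k[[x]]$, $x\mapsto x$, $y\mapsto\phi(x)$. Its kernel $\mathfrak{p}$ is a prime ideal containing both $f_1$ and $g_1$, as these vanish on the branch. Since $\psi$ is injective on $k[x]$ we have $\mathfrak{p}\cap k[x]=0$, so $\mathfrak{p}$ is not maximal; as $k[x,y]$ is a two-dimensional UFD, the nonzero non-maximal prime $\mathfrak{p}$ has height one and is generated by a single irreducible polynomial. But $f_1$ and $g_1$ are irreducible elements of $\mathfrak{p}$, so each generates it, giving $(f_1)=(g_1)$ and hence a common component $C_{f_1}=C_{g_1}$ through $P$. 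The delicate point I expect is the bookkeeping in the second paragraph: verifying that regularity and the non-tangency of the $y$-axis persist at each successive infinitely near point, so that a single genuine power series—rather than a Puiseux series with fractional exponents—governs the entire chain.
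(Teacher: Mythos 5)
Your argument is correct, but it is a genuinely different route from the paper's: the paper disposes of this proposition with a one-line citation to Lipman (Theorem 2.1), i.e.\ it invokes the general theory of infinitely near points rather than proving anything on the spot, whereas you give a self-contained proof. Your two key moves --- (a) using regularity to reduce to a single smooth, reduced component through $P$ and parametrizing its branch by a formal power series $y=\phi(x)$, whose coefficients $a_1,a_2,\dots$ are read off one at a time as the coordinates $(0,a_r)$ of the successive infinitely near points, and (b) identifying the component via the height-one prime $\Ker\bigl(k[x,y]\to k[[x]],\ y\mapsto\phi(x)\bigr)$, which is principal and hence generated by each of the irreducible polynomials $f_1,g_1$ it contains --- are both sound. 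The ``delicate point'' you flag in fact resolves cleanly: after the first blowup $y=xz$ the shifted branch at each infinitely near point has the form $z-a_r=a_{r+1}x+\cdots$, so its tangent is never the exceptional line $x=0$ and no Puiseux series or further coordinate changes are needed; you could state this one-line computation explicitly to close the gap you yourself identify. What the citation buys the paper is brevity and generality (Lipman's result is not restricted to regular points); what your proof buys is transparency, an explicit dictionary between the chain of infinitely near points at a smooth point and the Taylor coefficients of the branch, and independence from an external reference --- at the cost of working only in the regular case, which is all the proposition asserts anyway.
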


\begin{proof}
  \cite{lipman}, Theorem 2.1.
\end{proof}

\subsection{Local intersection number}\label{se:intersection_number}

We recall the basic knowledge and the definition 
of intersection of plane curves by~\cite{fulton}.

\begin{thmdf}\label{thm:fulton}
  Let $C_f,C_g$ be plane curves and let $P\in\A^2(k)$ be a point.
  There is a unique {\em intersection number $I_P(f,g)$} 
  defined for all plane curves $C_f,C_g$ and all points $P\in\A^2(k)$ 
  satisfying the following properties:
  \begin{enumerate}
    \item If $C_f,C_g$ have no common component passing through $P$,
      then $I_P(f,g)$ is a non-negative integer; 
      otherwise $I_P(f,g) = \infty$.
    \item $I_P(f,g) = 0$ if and only if $P\notin V(f)$ or $P\notin V(g)$.
    \item If $\varphi$ is an affine change of coordinates, then 
      $I_P(f,g) = I_{\varphi(P)}(f^\varphi,g^\varphi)$,
    \item $I_P(f,g) = I_P(g,f)$.
    \item $I_P(f,g) \ge m_P(f)m_P(g)$, where the equality occurs 
      if and only if $C_f$ and $C_g$ have no common tangent line at $P$
      (so called {\em transversal intersection}).
    \item $I_P(f_1f_2,g) = I_P(f_1,g) + I_P(f_2,g)$
      for any $f_1,f_2\in k[x,y]$.
    \item $I_P(f,g) = I_P(f,g+hf)$ for any $h\in k[x,y]$.
  \end{enumerate}
\end{thmdf}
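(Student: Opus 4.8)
The plan is to prove the statement in two independent halves. First I would show that any quantity $I_P(f,g)$ satisfying properties (1)--(7) is \emph{uniquely} determined by them, by exhibiting a terminating procedure that computes it from the axioms alone. Second I would establish \emph{existence} by producing an explicit candidate and verifying all seven properties for it. Together these give the unique intersection number claimed.

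For uniqueness I would first invoke property (3) to move $P$ to the origin and property (1) to settle the case of a common component through $P$ (where the value is $\infty$). Assuming no common component, and $P\in V(f)\cap V(g)$ (otherwise property (2) already gives $0$), I would induct on the value $n=I_P(f,g)$. Inspecting the one-variable restrictions $f(x,0)$ and $g(x,0)$, two situations arise. If one of them vanishes identically, say $f(x,0)\equiv 0$, then $y\mid f$; writing $f=y\,h$ and applying property (6) gives $I_P(f,g)=I_P(y,g)+I_P(h,g)$, where $I_P(y,g)$ reduces through properties (7), (6), (5) and (2) to the order of vanishing of $g(x,0)$ at the origin and is at least $1$, so $I_P(h,g)<n$ falls under the induction hypothesis. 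If neither restriction vanishes, I would subtract a suitable multiple $h f$ from $g$ (property (7)) to strictly lower $\deg g(x,0)$, exactly as in the Euclidean algorithm, eventually forcing one restriction to vanish and returning to the previous case. Since every branch either terminates or strictly decreases $n$, the axioms pin down a single value.

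For existence I would take the local ring $\mathcal{O}_P$ of $\A^2$ at $P$, that is the localization of $k[x,y]$ at the maximal ideal of functions vanishing at $P$, and define
$$I_P(f,g) := \dim_k \mathcal{O}_P/(f,g).$$
Properties (2), (4) and (7) are then immediate, since $(f,g)=(f,g+hf)$ as ideals, the construction is symmetric in $f$ and $g$, and the quotient vanishes exactly when $f$ or $g$ is a unit in $\mathcal{O}_P$, i.e.\ when $P\notin V(f)$ or $P\notin V(g)$. Property (3) holds because an affine change of coordinates induces an isomorphism $\mathcal{O}_P\cong\mathcal{O}_{\varphi(P)}$ sending $(f,g)$ to $(f^\varphi,g^\varphi)$. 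For (1) I would argue that a common component through $P$ makes $(f,g)$ contained in a non-maximal prime, giving an infinite-dimensional quotient, whereas coprimality makes $(f,g)$ primary to the maximal ideal of $P$, so the quotient is Artinian and hence finite-dimensional. Property (6) follows from the short exact sequence of $\mathcal{O}_P$-modules
$$0 \longrightarrow \mathcal{O}_P/(f_2,g) \xrightarrow{\ \cdot f_1\ } \mathcal{O}_P/(f_1f_2,g) \longrightarrow \mathcal{O}_P/(f_1,g) \longrightarrow 0,$$
whose left exactness uses that $f_1$ is a non-zero-divisor modulo $(g)$ --- equivalently that $C_{f_1}$ and $C_g$ share no component through $P$ --- together with additivity of $\dim_k$ along exact sequences; the common-component case again reads $\infty=\infty$.

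The step I expect to be the main obstacle is property (5), the bound $I_P(f,g)\ge m_P(f)\,m_P(g)$ with equality precisely at a transversal intersection. With $P$ at the origin I would write the leading forms $F$ and $G$ of $f$ and $g$, of degrees $m=m_P(f)$ and $n=m_P(g)$, and compare $\mathcal{O}_P/(f,g)$ with the graded object governed by $F$ and $G$: the inequality amounts to showing that the monomials of degree below $m+n$ cannot all become dependent modulo the ideal generated by the leading forms, and the equality case is controlled by whether $F$ and $G$ have a common linear factor, i.e.\ a common tangent line at $P$. Carrying out this filtration/degree comparison carefully, tracking the lowest-order behaviour and the extra surviving classes created by a shared tangent, is the genuinely delicate part; the remaining properties are comparatively formal.
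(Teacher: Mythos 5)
The paper does not actually prove this statement: it is quoted as a Theorem--Definition with a citation to Fulton, so there is no internal proof to compare against. Your outline is, in structure, exactly Fulton's classical argument from the cited source: uniqueness by showing the axioms alone force a terminating computation (move $P$ to the origin by (3), dispose of the common-component and non-intersection cases by (1) and (2), then run a Euclidean-type reduction on the restrictions $f(x,0)$, $g(x,0)$ using (7) and (4) until one vanishes, factor out $y$ and apply (6), and induct on the value $n$); existence by taking $I_P(f,g)=\dim_k\mathcal{O}_P/(f,g)$ and checking the axioms. The reductions you describe are sound --- in particular, in the no-common-component case at most one of $f(x,0),g(x,0)$ can vanish identically (otherwise $y$ would be a common factor), the value $I_P(y,g)$ does compute to $\mathrm{ord}_0\,g(x,0)\ge 1$, and the short exact sequence for (6) is correctly conditioned on $f_1$ being a non-zero-divisor modulo $(g)$ in the UFD $\mathcal{O}_P$.

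The one substantive piece you leave as a sketch is property (5), which you rightly flag as the delicate step. To close it one uses Fulton's exact sequence: with $I=(x,y)$, $m=m_P(f)$, $n=m_P(g)$, the sequence
\begin{displaymath}
  k[x,y]/I^{n}\times k[x,y]/I^{m}\;\xrightarrow{\;(a,b)\mapsto af+bg\;}\;
  k[x,y]/I^{m+n}\;\longrightarrow\;k[x,y]/\bigl(I^{m+n}+(f,g)\bigr)\;\longrightarrow\;0
\end{displaymath}
is exact, the first map is injective precisely when the leading forms $F$ and $G$ have no common factor (no common tangent), and $\dim_k\mathcal{O}_P/(f,g)\ge\dim_k k[x,y]/(I^{m+n}+(f,g))$ with equality if and only if $I^{m+n}\subseteq(f,g)\mathcal{O}_P$; counting dimensions then yields both the inequality and the characterization of equality. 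As written, your proposal is a correct and standard plan, but it is not yet a complete proof until this dimension count is carried out.
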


\section{Local properties of blowups of intersection.}

\begin{definition}\label{def:B}
  Let $C_f, C_g$ be plane curves and let $P\in\A^2$ be a point.
  Let $C_f^\prime, C_g^\prime$ be the strict transforms of $C_f,C_g$ respectively 
  under the blowing up the plane at the point $P$.
  Then, we define the number 
  \begin{displaymath}
  \mathcal{B}_P(f,g) = \begin{cases}
    \infty,\quad\text{ if }C_f\text{ and }C_g
      \text{ share a component passing through }P,\\
    0,\quad\text{ if }P\notin V(f)\cap V(g),\\
    m_P(f)m_P(g) + \sum_Q\mathcal{B}_Q(f^\prime, g^\prime)
      \quad\text{ otherwise}, 
    \end{cases}
  \end{displaymath}
  where the sum in the last case runs through 
  all the first order infinitely near points $Q$ 
  common to $C_f$ and $C_g$.
  The number $\mathcal{B}_P(f,g)$ may sometimes be denoted also
  by $\mathcal{B}_P(C_f,C_g)$.
\end{definition}

\begin{lemma}\label{lem:blowup_stop}
  Let $C_f, C_g$ be curves 
  having no common component passing through $P$.
  Then the computation of $\mathcal{B}_P(f,g)$ according to
  Definition~\ref{def:B} terminates after finitely many steps.
  More precisely, there exist $r\in\N^+$ such that 
  the curves $C_f$ and $C_g$ have no infinitely near point 
  at $P$ of order $r$ in common.
\end{lemma}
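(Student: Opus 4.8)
The plan is to show that if the curves $C_f$ and $C_g$ share infinitely near points of every order at $P$, then they must share a common component through $P$, contradicting the hypothesis. This is a contrapositive argument: termination fails exactly when the sum in Definition~\ref{def:B} never bottoms out, which means for every $r\in\N^+$ there is at least one chain of infinitely near points of length $r$ common to both curves. I would organize the proof around producing, from such an infinite family of shared configurations, a single infinite branch shared by the two curves, and then invoke Proposition~\ref{prop:blowup_reg} to derive the common component.

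First I would set up the tree structure. Each blowup replaces a common point $Q$ on the strict transforms by finitely many first order infinitely near points, and the recursion in $\mathcal{B}$ only continues into those $Q$ that lie on \emph{both} strict transforms. So the shared infinitely near points of $C_f$ and $C_g$ at $P$ form a rooted tree, finitely branching at each vertex since the number of common first order infinitely near points at any point is bounded by the number of common tangents, hence finite. If the computation never terminates, this tree is infinite, and by K\"onig's lemma an infinite finitely-branching tree contains an infinite branch. This gives a single infinite sequence of points $P=Q_0, Q_1, Q_2,\dots$ with $Q_{i+1}$ a first order infinitely near point common to the strict transforms at $Q_i$.

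Next I would descend to a regular situation so that Proposition~\ref{prop:blowup_reg} applies. The key observation is that the multiplicity $m_{Q_i}(f^{(i)})$ of the strict transform at the $i$-th stage cannot increase without bound and in fact stabilizes: blowing up a point of multiplicity $m$ produces strict transforms whose multiplicities at the infinitely near points sum to at most $m$, so along the infinite branch the multiplicities form a non-increasing sequence of positive integers and are eventually constant. Once they stabilize at the minimal value, I claim the corresponding point becomes a regular point of both strict transforms; the classical fact is that a multiplicity-one point that keeps being reproduced along an infinite branch is a smooth point whose entire configuration of infinitely near points is determined. Passing to this tail of the branch, we have a point $Q_N$ regular for both transformed curves, and for every $n\in\N^+$ the configurations of infinitely near points of length $n$ of the two transforms at $Q_N$ coincide, since the whole infinite branch is shared.

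Finally, Proposition~\ref{prop:blowup_reg} then yields that the strict transforms of $C_f$ and $C_g$ share a common component through $Q_N$; pushing this component forward under the composition of the blowup morphisms $\pi$ (each birational) produces a common component of $C_f$ and $C_g$ passing through $P$, contradicting the hypothesis. Hence the tree is finite and the recursion terminates, which also gives the sharper statement that there is some $r$ with no common infinitely near point of order $r$. The main obstacle I anticipate is the reduction to the regular case: justifying that the multiplicities stabilize and that a stable multiplicity-one point along an infinite shared branch really satisfies the hypotheses of Proposition~\ref{prop:blowup_reg} for \emph{all} lengths $n$ requires care, since one must ensure the shared branch is not an artifact of non-reduced structure but genuinely matches the configurations tracked by Lipman's theorem.
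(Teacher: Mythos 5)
Your overall strategy is the same as the paper's: reduce along a shared infinite branch to a point that is regular on both strict transforms, and then invoke Proposition~\ref{prop:blowup_reg} (Lipman) to produce a common component, contradicting the hypothesis. Making the K\"onig's lemma step explicit (the shared infinitely near points form a finitely branching tree, so non-termination yields an infinite shared branch) is a genuine improvement in rigour over the paper's wording, and pushing the component forward along the birational blowup morphisms is handled correctly.

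The one step that does not close as written is the passage from ``the multiplicities along the branch stabilize'' to ``the stabilized point is regular on both strict transforms.'' Monotonicity of $m_{Q_i}$ gives a stable value $m\ge 1$, but nothing in your argument forces $m=1$: for instance $f=y^2$ has multiplicity $2$ at $(0,0)$ and its strict transform is again $y^2$, of multiplicity $2$ at the unique infinitely near point, forever. What is actually needed is (a) the observation that the set of infinitely near points of $C_f$ depends only on the reduced curve $C_{f_{\mathrm{red}}}$, so one may replace $f$ and $g$ by their reduced parts without changing the tree of shared points (and $f_{\mathrm{red}}$, $g_{\mathrm{red}}$ still share no component through $P$); and (b) the resolution theorem for reduced plane curves --- along any branch the multiplicity drops to $1$ after finitely many blowups, which follows, e.g., from the finiteness of $\sum_Q m_Q(m_Q-1)/2$ over all infinitely near points $Q$, and is exactly what the paper imports as Theorem~3.4.4 of \cite{wall}. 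You flag this reduction as the main obstacle, but the justification you sketch (``stabilize at the minimal value, hence regular'') is circular: the ``classical fact'' you quote already assumes the stable multiplicity is one. With (a) and (b) supplied, the rest of your argument --- regularity forces a single chain of infinitely near points at each stage, the shared infinite branch then means the length-$n$ configurations coincide for every $n$, and Proposition~\ref{prop:blowup_reg} applies --- is correct and matches the paper's proof.
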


\begin{proof}
  By blowing up $C_f$ and $C_g$ in $P$ 
  and tracking the common infinitely near points at $P$, 
  in each branch either the computation stops because 
  there are no common infinitely near points, 
  or after finitely many steps we arrive to the 
  situation that we need to compute $\mathcal{B}_P(f,g)$,
  where $P$ is regular in both $C_f$ and $C_g$~(\cite{wall}),
  Theorem 3.4.4.

  So assume now that $P$ is regular in $C_f$ and $C_g$ and
  both curves share the common tangent at $P$.
  Assume that after blowing up at $P$ their strict
  transforms again share a tangent in the common
  infinitely near point, and that this situation reappears
  in each step. Then by Proposition~\ref{prop:blowup_reg} 
  the components of $(f)$ and $(g)$ through $P$ coincide, 
  a contradiction.
\end{proof}

\begin{remark}\label{rem:blowup_induction}
  We use Lemma~\ref{lem:blowup_stop} in the proofs of the
  following propositions about $\mathcal{B}$ as follows:
  for given $P$, $C_f$ and $C_g$, we want to prove a claim 
  about $\mathcal{B}_P(f,g)$. 
  If $P\notin V(f)$ or $P\notin V(g)$ or $P\in V(f)\cap V(g)$ but
  $C_f$ and $C_g$ intersect transversally in $P$,
  we prove the claim about $\mathcal{B}$ directly 
  (the start of the induction). 
  In other cases we will prove it, 
  provided the claim holds for $\mathcal{B}_Q(C_f^\prime, C_g^\prime)$,
  where $C_f^\prime$ and $C_g^\prime$ are the strict transforms 
  of $C_f$ and $C_g$ after blowing up at $P$, and $Q$ is 
  a common first order infinitely near point at $P$.
  So the induction step is to prove the claim for $\mathcal{B}_P(f,g)$
  provided the claim is true for $\mathcal{B}_Q(\tilde f,\tilde g)$,
  where the configurations of the infinitely near points 
  of $\tilde f$ resp. $\tilde g$ at $Q$ 
  (see the commentary before Proposition~\ref{prop:blowup_reg}) 
  have smaller length than those of $f$ resp. $g$ at $P$.
  We refer to this proving style as the blowup induction.
\end{remark}

In the rest of the section we will prove that 
the number $\mathcal{B}_P(f,g)$ computed 
recursively as in Definition~\ref{def:B} 
is exactly the intersection number $I_P(f,g)$.
We do it by verifying that $\mathcal{B}_P(f,g)$ satisfies the properties 
in Definition~\ref{thm:fulton}.

\begin{proposition}\label{prop:aff_transf}
  If $\varphi$ is an affine change of coordinates, then 
  $$\mathcal{B}_P(f,g) = \mathcal{B}_{\varphi(P)}(f^\varphi,g^\varphi).$$
\end{proposition}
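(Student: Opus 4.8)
The plan is to argue by the blowup induction of Remark~\ref{rem:blowup_induction}, checking that each of the three cases of Definition~\ref{def:B} is respected by $\varphi$. Since $\varphi$ is a bijection of $\A^2$ carrying $P$ to $\varphi(P)$ and the irreducible components of $C_f, C_g$ to those of $C_{f^\varphi}, C_{g^\varphi}$, the curves $C_f, C_g$ share a component through $P$ if and only if $C_{f^\varphi}, C_{g^\varphi}$ share one through $\varphi(P)$; this settles the $\infty$ case. For the zero case, the notation fact that $\varphi(P)\in V(f^\varphi)$ holds precisely when $P\in V(f)$ gives that $P\in V(f)\cap V(g)$ if and only if $\varphi(P)\in V(f^\varphi)\cap V(g^\varphi)$, so both sides vanish simultaneously. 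These provide the base of the induction.

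For the recursive case I would first record that multiplicity is an affine invariant, $m_P(f) = m_{\varphi(P)}(f^\varphi)$ and likewise for $g$: the lowest-degree form of the Taylor expansion of $f$ at $P$ is carried to that of $f^\varphi$ at $\varphi(P)$ by the linear part of $\varphi$, so its degree is unchanged, and hence the leading products $m_P(f)m_P(g)$ agree on both sides. Next I would invoke Proposition~\ref{prop:isomorphic_blowups}: $\varphi$ together with the center $P$ induces a linear map $\tilde\varphi$ of the relevant affine charts of the two blowups that carries the strict transform $C_f^\prime$ to the strict transform of $C_{f^\varphi}$, and, since $\tilde\varphi$ depends only on $\varphi$ and $P$ and not on the particular curve, simultaneously carries $C_g^\prime$ to the strict transform of $C_{g^\varphi}$. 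Writing $f^\prime, g^\prime$ and $(f^\varphi)^\prime, (g^\varphi)^\prime$ for the local defining polynomials, this reads $(f^\prime)^{\tilde\varphi} = (f^\varphi)^\prime$ and $(g^\prime)^{\tilde\varphi} = (g^\varphi)^\prime$. Consequently $\tilde\varphi$ restricts to a bijection $Q\mapsto\tilde\varphi(Q)$ between the first order infinitely near points common to $C_f, C_g$ at $P$ and those common to $C_{f^\varphi}, C_{g^\varphi}$ at $\varphi(P)$.

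With this bijection in hand, and since $\tilde\varphi$ is itself an affine change of coordinates while the configurations of $f^\prime, g^\prime$ at $Q$ have strictly smaller length than those of $f, g$ at $P$ (Lemma~\ref{lem:blowup_stop}), the induction hypothesis applies to each summand: $\mathcal{B}_Q(f^\prime, g^\prime) = \mathcal{B}_{\tilde\varphi(Q)}((f^\prime)^{\tilde\varphi}, (g^\prime)^{\tilde\varphi}) = \mathcal{B}_{\tilde\varphi(Q)}((f^\varphi)^\prime, (g^\varphi)^\prime)$. Summing over $Q$ and reindexing along the bijection yields $\sum_Q \mathcal{B}_Q(f^\prime, g^\prime) = \sum_{Q^\prime} \mathcal{B}_{Q^\prime}((f^\varphi)^\prime, (g^\varphi)^\prime)$, and adding the equal leading products gives $\mathcal{B}_P(f,g) = \mathcal{B}_{\varphi(P)}(f^\varphi, g^\varphi)$.

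The main obstacle I anticipate is the careful handling of the induced map $\tilde\varphi$ on the affine charts: one must check that the chart of $X$ in which the exceptional points at $P$ sit is transported by $\tilde\varphi$ to the corresponding chart at $\varphi(P)$ (Remark~\ref{rem:isomorphic_blowups} lets us normalize so that the $y$-axis is tangent to neither curve, but a general $\varphi$ may spoil such a normalization), and that $\tilde\varphi$ really is a single affine isomorphism serving both curves so that the two bijections of infinitely near points coincide. Once Proposition~\ref{prop:isomorphic_blowups} is read as providing one curve-independent linear map, the reindexing of the sum and the appeal to the induction hypothesis become routine.
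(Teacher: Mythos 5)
Your proposal is correct and follows essentially the same route as the paper: both rest on Proposition~\ref{prop:isomorphic_blowups} to transport strict transforms and infinitely near points, and on the affine invariance of $m_P$, so that the two recursive computations of $\mathcal{B}$ run in parallel. The paper states this in two sentences without spelling out the blowup induction; your version merely makes explicit the base cases, the bijection of shared infinitely near points, and the reindexing of the sum.
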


\begin{proof}
  Affine change of coordinates taking 
  $C_f$ to $C_{f^\varphi}$ and $C_g$ to $C_{g^\varphi}$ 
  induces locally an affine change of coordinates taking 
  $C_f^\prime$ to $C_{f^\varphi}^\prime$ and $C_g^\prime$ to $C_{g^\varphi}^\prime$
  (Proposition~\ref{prop:isomorphic_blowups}).
  So the infinitely near points of $C_f$ are mapped
  to the infinitely near points of $C_{f^\varphi}$;
  the same holds for $C_g$ and $C_{g^\varphi}$.

  Further, the affine change preserves the multiplicity of a curve
  in a point, so $m_{\varphi(P)}(f^\varphi) = m_P(f)$.
  Hence the computation of $\mathcal{B}_{\varphi(P)}(f^\varphi,g^\varphi)$
  is the same as the computation of $\mathcal{B}_P(f,g)$.
\end{proof}

\begin{proposition}\label{prop:product}
  Let $C_f, C_g$ have no common component passing through $P$ 
  and let $f = uv$. Then
  $$\mathcal{B}_P(f,g) = \mathcal{B}_P(u,g) + \mathcal{B}_P(v,g).$$
\end{proposition}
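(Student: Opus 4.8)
The plan is to run the blowup induction of Remark~\ref{rem:blowup_induction} on the first argument, resting on two structural facts: the additivity of multiplicity, $m_P(uv) = m_P(u) + m_P(v)$, and the multiplicativity of the strict transform. First I would invoke Proposition~\ref{prop:aff_transf} to reduce to $P = (0,0)$ with the $y$-axis not tangent to $C_f$; here it is important that the leading form of $f = uv$ is the product of the leading forms of $u$ and $v$, so that the tangent lines of $C_f$ at $P$ are exactly the union of those of $C_u$ and $C_v$. Consequently the $y$-axis is non-tangent to all three curves at once, the local formula of Remark~\ref{rem:isomorphic_blowups} applies to each, and substituting $y = xz$ into $f = uv$ and pulling out the exceptional factor $x^{m(f)} = x^{m(u)}x^{m(v)}$ shows, locally, that $f^\prime = u^\prime v^\prime$.

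Next I would set up the bookkeeping of infinitely near points. Since $C_{f^\prime} = C_{u^\prime}\cup C_{v^\prime}$, the first order infinitely near points of $C_f$ at $P$ are the union of those of $C_u$ and $C_v$. Writing $S_f$, $S_u$, $S_v$ for the corresponding sets of points also lying on $C_{g^\prime}$, one gets $S_f = S_u\cup S_v$; moreover a point of $S_u\setminus S_v$ must lie off $C_{v^\prime}$, so that $\mathcal{B}_Q(v^\prime,g^\prime) = 0$ there by the second clause of Definition~\ref{def:B}, and symmetrically. For the base case the recursion stops, i.e. $P\notin V(f)\cap V(g)$ or the intersection is transversal, and additivity is immediate from $m_P(f) = m_P(u)+m_P(v)$ together with the observation that transversality of the pair $(f,g)$ forces transversality of $(u,g)$ and of $(v,g)$ via the same tangent decomposition. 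For the inductive step I would expand $\mathcal{B}_P(u,g)+\mathcal{B}_P(v,g)$ by Definition~\ref{def:B}, merge the two products into $m_P(f)m_P(g)$, and apply the induction hypothesis $\mathcal{B}_Q(u^\prime v^\prime,g^\prime) = \mathcal{B}_Q(u^\prime,g^\prime)+\mathcal{B}_Q(v^\prime,g^\prime)$ at each $Q\in S_f$; the vanishing noted above lets the two sums over $S_u$ and $S_v$ collapse into the single sum over $S_f$, giving $\mathcal{B}_P(u,g)+\mathcal{B}_P(v,g) = m_P(f)m_P(g) + \sum_{Q\in S_f}\mathcal{B}_Q(f^\prime,g^\prime) = \mathcal{B}_P(f,g)$.

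Two points require care before the induction hypothesis may be invoked. I must check that the hypothesis of the proposition propagates: $C_{u^\prime}$ and $C_{g^\prime}$ share no component through $Q$, since such a component would project under $\pi$ to a common component of $C_u$ (hence of $C_f$) and $C_g$ through $P$, contradicting the standing assumption; the same argument covers $v$. The computation $f^\prime = u^\prime v^\prime$ is itself a short substitution, so I expect the main obstacle to be the combinatorial reconciliation of the three index sets $S_f$, $S_u$, $S_v$ — in particular verifying that the terms $\mathcal{B}_Q$ attached to points lying on only one factor vanish exactly where needed so that the sums align.
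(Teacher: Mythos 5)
Your proposal is correct and follows essentially the same route as the paper's proof: blowup induction combined with $m_P(uv)=m_P(u)+m_P(v)$, the local identity $(uv)^\prime=u^\prime v^\prime$, and the fact that the first order infinitely near points of $C_{uv}$ at $P$ are the union of those of $C_u$ and $C_v$. Your additional bookkeeping of the index sets $S_f=S_u\cup S_v$ and the check that the no-common-component hypothesis propagates to the strict transforms are details the paper leaves implicit, but they do not change the argument.
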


\begin{proof}
  We note that the set of the first order infinitely near points 
  of $C_f = C_{uv}$ at $P$
  is the union of the sets of the first order infinitely near points 
  of $C_u$ and $C_v$ at $P$.
  For, the first order infinitely near points depend
  only on the lowest degree form of the polynomial defining the curve,
  and the lowest degree form of $f$ is the product
  of those of $u$ and $v$.

  We proceed by blowup induction, see~Remark~\ref{rem:blowup_induction}.
  First, let the curves $C_f$ and $C_g$ have 
  no common first order infinitely near point at $P$. 
  Then the pairs $C_u,C_g$ and $C_v,C_g$ also
  have no common infinitely near point at $P$. 
  Therefore
  \begin{displaymath}
    \mathcal{B}_P(f,g) = m_P(f)m_P(g) = m_P(u)m_P(g) + m_P(v)m_P(g) =
    \mathcal{B}_P(u,g) + \mathcal{B}_P(v,g).
  \end{displaymath}

  In a general case, we may assume $P = (0,0)$
  and the $y$-axis in neither tangent to $C_u$, $C_v$ nor to $C_g$ at $P$,
  by Proposition~\ref{prop:aff_transf}.
  Then one checks easily
  (see Remark~\ref{rem:isomorphic_blowups})
  that after blowing up $C_f$ in $P$ 
  we have $(uv)^\prime = u^\prime v^\prime$
  for the polynomials defining the strict transforms $C_f^\prime$. 
  Again, since the first order infinitely near points of $C_u$ and $C_v$ are all
  among the first order infinitely near points of $C_f = C_{uv}$, it holds
  \begin{eqnarray}
    \mathcal{B}_P(f,g) &=& m_P(f)m_P(g) +
      \textstyle{\sum_Q\mathcal{B}_Q((uv)^\prime,g^\prime)}\notag\\
      &=& (m_P(u)+m_P(v))m_P(g) 
          + \textstyle{\sum_Q\mathcal{B}_Q(u^\prime v^\prime,g^\prime)},\label{eq:5}
  \end{eqnarray}
  where the sum runs through the first order infinitely near points at $P$ 
  that are shared by $C_{uv}$ and $C_g$.
  Now, we can continue by induction 
  \begin{eqnarray*}
    (\ref{eq:5}) &=& m_P(u)m_P(g) + m_P(v)m_P(g) + 
      \textstyle{\sum_Q\mathcal{B}_Q(u^\prime,g^\prime)} + 
      \textstyle{\sum_Q\mathcal{B}_Q(v^\prime,g^\prime)}\\
    &=& \mathcal{B}_P(u,g) + \mathcal{B}_P(v,g).
  \end{eqnarray*}
\end{proof}

We still need to verify the last property of $\mathcal{B}_P$.
To do this, we first study a special kind of generators of an ideal $I$.

\begin{definition}
  Let $I = (f,g)\subset k[x,y]$ be an ideal.
  We say that $f,g$ is a {\em max-order basis} of $I$, if
  $m(f)\le m(g)$ 
  and for every $\tilde{g}\in I$ such that $(f,\tilde{g}) = (f,g)$ 
  we have that $m(\tilde{g})\le m(g)$.
\end{definition}

\begin{example}
  A max-order basis of the ideal is not unique:
  for example $x, y^2$ is a max-order basis of the ideal they generate,
  but also $x, y^2+x^3$ is a max-order basis of the same ideal.
  On the other hand, not every ideal has a max-order basis,
  for example 
  $$(x^2-x,xy) = (x^2-x, x^2y) = (x^2-x, x^3y) = \dots.$$
\end{example}

\begin{lemma}\label{lem:mmo_exists}
  Let $C_f,C_g$ be curves with no common component through $(0,0)$.
  Then $(f,g)$ has a max-order basis.
  Moreover, if $m(f)\le m(g)$ then there exists $h\in k[x,y]$
  such that $f, g+hf$ is a max-order basis.
\end{lemma}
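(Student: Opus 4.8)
The plan is to derive the first assertion from the ``moreover'' part by symmetry, so I would concentrate on proving the latter. Assume $m(f)\le m(g)$; since a max-order basis forces the fixed first generator to vanish at $P=(0,0)$ to at least the order of the second, the relevant (and only non-degenerate) situation is $1\le m(f)\le m(g)$, i.e.\ both curves pass through the origin, and I would treat that case. The key observation is that every element of the coset $g+(f)$ is automatically an admissible second generator, since $(f,g+hf)=(f,g)=I$ for all $h\in k[x,y]$. So the natural candidate is an element $g+hf$ of largest possible multiplicity. Concretely I would introduce
$$M_1=\max\{m(\tilde g):\tilde g\in I,\ (f,\tilde g)=I\},\qquad M_2=\max_{h\in k[x,y]}m(g+hf),$$
and aim to show $M_1=M_2$ with the maximum attained. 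Picking $g_0=g+hf$ realizing it then gives a max-order basis $f,g_0$, the required inequality $m(f)\le m(g)\le m(g_0)$ being automatic because $g$ itself is a competitor.

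The first, routine, ingredient is finiteness. Because $C_f,C_g$ share no component through $P$, one has $I_P(f,\tilde g)=I_P(f,g)$ for every competitor $\tilde g$ (a consequence of properties~(6) and~(7) of Definition~\ref{thm:fulton}), and property~(5) gives $m(\tilde g)\le I_P(f,\tilde g)/m(f)=I_P(f,g)/m(f)<\infty$ since $m(f)\ge 1$. Hence $M_1$ is a finite maximum of nonnegative integers and is attained; the same estimate, applied in the local ring $\mathcal{O}=k[x,y]_{(x,y)}$ via $\dim_k\mathcal{O}/(f,g)<\infty$, shows that $\sup_{s\in\mathcal{O}}\mathrm{ord}(g+sf)$ is finite as well. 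The inequality $M_2\le M_1$ is immediate, as coset elements are genuine competitors.

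The heart of the matter, and the only place the no-common-component hypothesis is really used, is the reverse inequality $M_1\le M_2$. Here I would pass to $\mathcal{O}$ and factor $f=f_0u$, $g=g_0v$ with $u,v$ units and $f_0,g_0$ the products of the branches through the origin; by hypothesis $\gcd(f_0,g_0)=1$ in the UFD $\mathcal{O}$. Given a competitor $\tilde g$, localizing $(f,\tilde g)=(f,g)$ and reducing modulo $(f)$ shows that $\bar{\tilde g}$ and $\bar g$ generate the same ideal of $\mathcal{O}/(f)=\mathcal{O}/(f_0)$. Coprimality of $f_0,g_0$ makes $\bar g$ a non-zero-divisor there, so $\bar{\tilde g}=\bar\sigma\,\bar g$ for a unit $\bar\sigma$; lifting $\sigma$ to a unit of the local ring $\mathcal{O}$ yields $\tilde g=\sigma(g+sf)$ with $s\in\mathcal{O}$, whence $m(\tilde g)=\mathrm{ord}(g+sf)$ and $M_1\le\sup_{s\in\mathcal{O}}\mathrm{ord}(g+sf)$. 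Finally I would replace an optimal $s=p/q$ (with $q(0,0)\ne 0$) by the polynomial $h$ obtained from truncating the power-series expansion of $p/q$: the correction $(s-h)f$ can be forced into arbitrarily high order while the optimal order is finite, so $m(g+hf)=\mathrm{ord}(g+sf)$ for suitable $h$, giving $\sup_{s\in\mathcal{O}}\mathrm{ord}(g+sf)\le M_2$. Chaining the inequalities forces $M_1=M_2$, the maximizing $g+hf$ is the desired second generator, and the first assertion follows by applying this to whichever of $(f,g)$, $(g,f)$ is ordered by nondecreasing multiplicity.

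I expect the main obstacle to be exactly this local-to-global step, in two respects: first, one must exploit the absence of a common branch through $P$ to upgrade the equality of ideals modulo $f$ into a \emph{unit} relation, which genuinely fails in $k[x,y]/(f)$ because components shared away from $P$ make $\bar g$ a zero-divisor globally; and second, one must justify that the supremum computed in $\mathcal{O}$ is realized by an actual polynomial $h$, which hinges on the finiteness bound ensuring that only finitely many low-degree terms are relevant.
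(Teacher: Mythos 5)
Your proof is correct, but it takes a genuinely different --- and in one respect more complete --- route than the paper's. The paper argues greedily: writing $F,G$ for the lowest-degree forms of $f,g$, it asserts that $f,g$ fails to be a max-order basis exactly when $F\mid G$, in which case subtracting $(G/F)f$ from $g$ strictly raises the multiplicity while preserving the ideal, and termination follows from a B\'ezout bound (the same finiteness you obtain from axioms (5)--(7) of Theorem-Definition~\ref{thm:fulton}). What the paper leaves entirely implicit is the ``if'' direction of its opening claim: that once $F\nmid G$, \emph{no} competitor $\tilde g$ with $(f,\tilde g)=(f,g)$ --- not merely one of the special form $g+hf$ --- can exceed $m(g)$. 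Your local-ring analysis supplies exactly this missing ingredient: factoring out the unit part of $f$ in $\mathcal{O}=k[x,y]_{(x,y)}$, using coprimality of the branches through the origin to make $\bar g$ a non-zero-divisor in $\mathcal{O}/(f_0)$, concluding that every competitor is a unit times $g+sf$ with $s\in\mathcal{O}$, and then truncating $s$ to a polynomial $h$ without changing the order. So your argument is longer but actually closes a gap that the paper glosses over, at the cost of importing standard facts about the regular local ring (UFD property, multiplicativity of the order function). One caveat: your parenthetical justification for restricting to $1\le m(f)$ is stated backwards (the definition requires $m(f)\le m(g)$, not the reverse), but the restriction itself is correct and genuinely necessary --- for $m(f)=0$ the lemma as stated can fail, e.g.\ $(x-1,y)=(x-1,x^ny)$ for all $n$ with $m_{(0,0)}(x^ny)=n+1$ unbounded, and both your finiteness bound $m(\tilde g)\le I_P(f,g)/m(f)$ and the paper's B\'ezout argument tacitly assume $m(f)\ge1$; this is harmless because the lemma is only invoked in Proposition~\ref{prop:gen_indep} after the cases $m(f)=0$ or $m(g)=0$ have been disposed of directly.
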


\begin{proof}
  Consider the ideal $(f,g)$. By $F$ we denote the lowest degree form of $f$
  and similarly let $G$ be the lowest degree form of $g$.

  It holds that $f,g$ is a max-order basis of $(f,g)$ with $m(f)\le m(g)$ 
  if and only if the $G$ is not divisible by $F$.

  So if $f,g$ is not a max-order basis of $(f,g)$, 
  then $G$ factors as $G=FH$, $H$ being a form.
  We replace $g$ by $g+Hf$ and get a new basis of the ideal:
  $(f,g) = (f,g+Hf)$ with $m(g+Hf) > m(g)$.
  The process stops, for otherwise there would be a polynomial $\tilde g$ 
  with $m(\tilde g) > \deg(f)\deg(g)$ such that $(f,\tilde g) = (f,g)$,
  which would be a contradiction to B\'ezout's theorem.
  Hence, we arrive to $g+hf$ such that $(f,g+hf) = (f,g)$ 
  and $f,g$ is a max-order basis, after finitely many steps . 
\end{proof}

\begin{proposition}\label{prop:gen_indep}
  Let $C_f, C_g$ have no common component passing through $P$.
  Then
  $$\mathcal{B}_P(f,g+hf) = \mathcal{B}_P(f,g)$$ 
  for any $h\in k[x,y]$.
\end{proposition}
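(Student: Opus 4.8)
The plan is to verify property~(7) for $\mathcal{B}$ by the blowup induction of Remark~\ref{rem:blowup_induction}. First I would normalize by Proposition~\ref{prop:aff_transf}: take $P=(0,0)$ and choose the coordinate change so that the $y$-axis is tangent to none of $C_f$, $C_g$, $C_{g+hf}$, so that the substitution formulas of Remark~\ref{rem:isomorphic_blowups} apply to all three curves in the chart $t_0\ne 0$. The base cases are immediate: if $P\notin V(f)$ both sides vanish, and if $P\in V(f)\setminus V(g)$ then $(g+hf)(P)=g(P)\ne 0$, so $P\notin V(g+hf)$ and again both sides are $0$. Since $\mathcal{B}$ is manifestly symmetric in its two arguments (Definition~\ref{def:B}) and passing from $g$ to $g+hf$ is reversible by subtracting $hf$, I may moreover assume $m_P(g+hf)\ge m_P(g)$, swapping the roles of $g$ and $g+hf$ otherwise.

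Writing $a=m(g)$ and $b=m(h)+m(f)$, the substitution $y\mapsto xy$ gives
$$(g+hf)(x,xy)=x^{a}g^\prime(x,y)+x^{b}h^\prime(x,y)f^\prime(x,y).$$
The easy case is $m_P(g+hf)=a$ (which, after the reduction above, means $b>a$, or $b=a$ with no cancellation of lowest forms). Here the lowest form of $g+hf$ agrees with that of $g$ at every tangent direction of $C_f$, because the added term carries the factor $F$, which vanishes there; hence $C_f$ shares exactly the same first order infinitely near points with $C_g$ and with $C_{g+hf}$. Moreover factoring out $x^{a}$ yields $(g+hf)^\prime=g^\prime+\tilde h\,f^\prime$ for the polynomial $\tilde h=x^{\,b-a}h^\prime$. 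Since the product terms $m_P(f)m_P(g)$ coincide, applying the induction hypothesis (property~(7) for the strict transforms, whose configurations are shorter) to each common point gives $\mathcal{B}_P(f,g+hf)=\mathcal{B}_P(f,g)$.

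The hard part will be the remaining case $b=a$ with cancellation of the lowest forms, where $m_P(g+hf)$ strictly increases; splitting off the offending leading form reduces it to a single step $g\mapsto g_1=g+H_0f$ with $H_0$ a form, $F\mid G$ and $m(g_1)=a^\prime>a$. To treat it I would compute on the blowup, where the cancellation shows $g^\prime+H_0(1,y)f^\prime=x^{\,a^\prime-a}g_1^\prime$. Using property~(7) at the deeper level to replace $g^\prime$ by $g^\prime+H_0(1,y)f^\prime$, then the additivity of Proposition~\ref{prop:product} (with the symmetry of $\mathcal{B}$) to split off the exceptional factor $x^{\,a^\prime-a}$, and finally the identity $\sum_Q\mathcal{B}_Q(f^\prime,x)=m_P(f)$, which expresses that the strict transform meets the exceptional line in $m_P(f)$ points counted with multiplicity, the surplus $m_P(f)(a^\prime-a)$ in the product term is exactly absorbed, while terms at non-common points vanish. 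This yields $\mathcal{B}_P(f,g)=\mathcal{B}_P(f,g_1)$.

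Finally, iterating such cancellation steps interleaved with the easy case exhausts a general $h$: the recursion terminates because each cancellation strictly raises $m_P(g)$, which is bounded by $\deg f\cdot\deg g$ via B\'ezout, exactly as in the proof of Lemma~\ref{lem:mmo_exists}. I expect the cancellation step to be the crux, since it is the only place where the jump in multiplicity must be reconciled with the change in the set of common infinitely near points; the key point is precisely that the lost product term $m_P(f)(a^\prime-a)$ reappears as the intersection of $f^\prime$ with the exceptional line, so no counting is lost.
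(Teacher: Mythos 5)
Your proof is correct and follows essentially the same route as the paper's: blowup induction after normalizing the chart, the identity $(g+hf)(x,xy)=x^{a}g^\prime+x^{b}h^\prime f^\prime$, and in the cancellation case the use of Proposition~\ref{prop:product} together with $\sum_Q\mathcal{B}_Q(f^\prime,x)=m_P(f)$ to absorb the surplus $m_P(f)(a^\prime-a)$, with termination via the B\'ezout bound. The only organizational difference is that where the paper reaches a max-order basis in one jump (Lemma~\ref{lem:mmo_exists} and its Case~4) you iterate single homogeneous cancellation steps $g\mapsto g+H_0f$, which is a repackaging of the same argument; your ``easy'' and ``hard'' cases correspond to the paper's Cases~1--2 and Case~3 respectively.
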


\begin{proof}
  Again by Proposition~\ref{prop:aff_transf}, we assume $P = (0,0)$
  and that the $y$-axis is not tangent to $C_f$ nor to $C_g$ at $P$.
  Hence, the strict transforms of curves constructed as
  described in Remark~\ref{rem:isomorphic_blowups} 
  contain all first order infinitely near points of $C_f$ a $C_g$.
  We proceed by the blowup induction, see Remark~\ref{rem:blowup_induction}.

  First, we solve the trivial cases.
  \begin{enumerate}
    \item[(i)] Let $m(f) = m(g) = 1$ 
      with $C_f$ and $C_g$ intersecting transversally in $P = (0,0)$.
      Then the curves $C_f$ and $C_g$ have no infinitely near point in common.
      The same holds for $C_f$ and $C_{g+hf}$.
      To check it, one distinguishes two cases: 
      if $m(h) > 0$,
      then $g+hf$ has the same linear form as $g$, 
      and if $m(h) = 0$, 
      the linear form of $g+hf$ is indeed different from the one of $g$ 
      but again is no multiple of the one of $f$.
      So $\mathcal{B}_P(f,g+hf) = \mathcal{B}_P(f,g) = 1$.
    \item[(ii)] Let $m(f) = 0$, $m(g) \ge 1$, then 
      $\mathcal{B}_P(f,g) = \mathcal{B}_P(f,g + hf) = 0$.
    \item[(iii)] Let $m(f)\ge 1$, $m(g) = 0$, then also $m(g+hf) = 0$
      and again $\mathcal{B}_P(f,g) = \mathcal{B}_P(f,g + hf) = 0$.
  \end{enumerate}

  Now, we use the hypothesis, that the assertion holds for
  $\mathcal{B}_P(f^\prime, (g+hf)^\prime)$, i.e. that
  $$\mathcal{B}_P(f^\prime, (g+hf)^\prime + \tilde h f^\prime) = 
    \mathcal{B}_P(f^\prime, (g+hf)^\prime)$$ 
  for any $\tilde h\in k[x,y]$, and we prove it for
  $\mathcal{B}_P(f,g+hf)$ by case distinction.

  Let $f = F + f_1$, where $F$ is the form consisting 
  of lowest degree terms, so $\deg(F) = m(f)$, 
  and $f_1$ is the polynomial containing the rest of $f$.
  Similarly, $g = G + g_1$ and $h = H + h_1$. 
  For the polynomials locally defining the strict transforms, 
  we have $f^\prime(x,y) = F(1,y) + xf_2(x,y)$
  for some $f_2\in k[x,y]$,
  similarly for $g^\prime$ and $h^\prime$.

  {\em Case 1:} Let $m(f) > m(g)$.
  An easy verification shows that 
  $$(g+hf)^\prime = g^\prime + x^{m(h)+m(f)-m(g)}h^\prime f^\prime.$$
  Since the set of the first order infinitely near points 
  of a curve at the point $P = (0,0)$
  depends only on the form of the lowest degree, 
  those of $C_{g+hf}$ are the same as those of $C_g$.
  So
  \begin{eqnarray*}
    \mathcal{B}_P(f,g+hf) 
      &=& m(f)m(g+hf) + \textstyle{\sum_Q\mathcal{B}_Q(
            f^\prime, (g+hf)^\prime)} \\
      &=& m(f)m(g) + \textstyle{\sum_Q\mathcal{B}_Q(
            f^\prime, g^\prime + x^{m(h)+m(f)-m(g)}h^\prime f^\prime)} \\
      &=& m(f)m(g) + \textstyle{\sum_Q\mathcal{B}_Q(
            f^\prime, g^\prime)} \\
      &=& \mathcal{B}_P(f,g),
  \end{eqnarray*}
  where the sum runs through the first order infinitely near points 
  of $C_f$ at $P$,
  and the third equality follows from the induction hypothesis.

  {\em Case 2:} We assume that $f,g$ 
  is a max-order basis of $(f,g)$ with $m(f)\le m(g)$
  and that $m(hf) \ge m(g)$,
  so $f, g+hf$ is also a max-order basis of $(f,g)$.

  Again, we conclude that $C_g$ and $C_{g+hf}$
  share the same first order infinitely near points at $P$ with $C_f$:
  it is straightforward, if $m(hf) > m(g)$, since in this case 
  $C_g$ and $C_{g+hf}$ have the same first infinitely near points at $P$.
  A little more care is required if $m(hf) = m(g)$:
  here for sure $m(g+hf) = m(g)$ because $f,g$ is a max-order basis 
  for $(f,g)$. 
  Therefore, the form of degree $m(g)=m(f)+m(h)$ in $g+hf$ does not vanish 
  and the $y$-coordinates of the infinitely near points of $g+hf$ are given by 
  the equation $G(1,y) + H(1,y)F(1,y) = 0$. From this we already easily check
  that the first order infinitely near points 
  shared by $C_f$ and $C_{g+fh}$ are the same
  as the first order infinitely near points shared by $C_f$ and $C_g$.

  For the polynomial defining the strict transform of $C_{g+hf}$ 
  the direct computation shows that
  $$(g + hf)^\prime = g^\prime + x^{m(f)+m(h)-m(g)}h^\prime f^\prime.$$
  So we have exactly the same computation as in Case 1 showing that
  $$\mathcal{B}_P(f, g+hf) = \mathcal{B}_P(f, g).$$ 

  {\em Case 3:} We assume that $f,g$ 
  is a max-order basis of $(f,g)$ with $m(f)\le m(g)$
  and that $m(hf) < m(g)$,
  so in this case $f, g+hf$ is not a max-order basis of $(f,g)$.

  In this case, we have that 
  $$(g + hf)^\prime = x^{m(g)-m(f)-m(h)}g^\prime + h^\prime f^\prime$$
  and the set of the first order infinitely near points 
  of $C_{g+hf}$ at $P=(0,0)$ 
  is the union of 
  those of $C_f$ and those of $C_h$, so the following sum 
  goes through the first order infinitely near points of $C_f$.
  \begin{eqnarray}
    \mathcal{B}_P(f, g+hf) 
      &=& m(f)m(g+hf) + \textstyle{\sum_Q\mathcal{B}_Q(
          f^\prime,(g+hf)^\prime)}\notag \\
      &=& m(f)m(hf) + \textstyle{\sum_Q\mathcal{B}_Q(
            f^\prime, x^{m(g)-m(f)-m(h)}g^\prime + h^\prime f^\prime)}\notag \\
      &=& m(f)m(hf) + \textstyle{\sum_Q\mathcal{B}_Q(
            f^\prime, x^{m(g)-m(f)-m(h)}g^\prime)}\label{eq:1} \\
      &=& m(f)m(hf)  
            + \textstyle{\sum_Q(m(g)-m(f)-m(h))\mathcal{B}_Q(f^\prime, x)} 
            + \textstyle{\sum_Q\mathcal{B}_Q(f^\prime, g^\prime)}\label{eq:2} \\
      &=& m(f)m(hf) + (m(g)-m(f)-m(h))m(f) 
            + \textstyle{\sum_Q\mathcal{B}_Q(f^\prime, g^\prime)}\label{eq:3} \\
      &=& m(f)m(g) 
            + \textstyle{\sum_Q\mathcal{B}_Q(f^\prime, g^\prime)}\notag \\
      &=& \mathcal{B}_p(f,g).\label{eq:4}
  \end{eqnarray}
  Here (\ref{eq:1}) follows from the induction hypothesis, 
  (\ref{eq:2}) follows from Proposition~\ref{prop:product}.
  The equality (\ref{eq:3}) follows from the fact that 
  $C_f$ has exactly $m(f)$
  counted with multiplicities infinitely near points at $P$ 
  and none of them coinciding with those of $x$.
  Finally (\ref{eq:4}) follows from the fact that 
  $\mathcal{B}_Q(f^\prime, g^\prime) = 0$ if $Q$ does not belong 
  to the first order infinitely near points of $C_g$.

  {\em Case 4:} We assume that $m(f)\le m(g)$
  and neither $f,g$ nor $f,g+hf$ is a max-order basis of
  the ideal they generate. 

  By Lemma~\ref{lem:mmo_exists}, there is $p\in k[x,y]$ such that
  $f,g+pf$ is a max-order basis and therefore by Case~3
  $$\mathcal{B}_P(f,g) = \mathcal{B}_P(f, g+pf).$$
  On the other hand $g + hf + (p-h)f = g+pf$, so again by Case~3
  $$\mathcal{B}_P(f,g+hf) = \mathcal{B}_P(f, g+pf),$$
  and we get
  $$\mathcal{B}_P(f,g+hf) = \mathcal{B}_P(f, g).$$
\end{proof}

\begin{theorem}\label{thm:main}
  $$\mathcal{B}_P(f,g) = I_P(f,g).$$
\end{theorem}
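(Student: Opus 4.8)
The plan is to invoke the uniqueness clause of Fulton's axiomatic characterization (Theorem-Definition~\ref{thm:fulton}): since $I_P$ is the \emph{unique} assignment satisfying properties (1)--(7), it suffices to check that $\mathcal{B}_P$ satisfies all seven of them, whence $\mathcal{B}_P = I_P$. Three of these are precisely the content of the propositions already proved: property (3) is Proposition~\ref{prop:aff_transf}, property (6) is Proposition~\ref{prop:product}, and property (7) is Proposition~\ref{prop:gen_indep}. So the remaining work is to dispatch the four structural properties (1), (2), (4) and (5) directly from Definition~\ref{def:B} together with the termination result Lemma~\ref{lem:blowup_stop}.

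For property (1), the value $\infty$ in the presence of a shared component through $P$ is built into the first branch of Definition~\ref{def:B}; when there is no common component, Lemma~\ref{lem:blowup_stop} guarantees that the recursion bottoms out after finitely many blowups, so $\mathcal{B}_P(f,g)$ is a finite sum of products of multiplicities, hence a non-negative integer. Property (4) is immediate from the evident symmetry of Definition~\ref{def:B} in its two arguments: the common-component and $V(f)\cap V(g)$ conditions are symmetric, $m_P(f)m_P(g)=m_P(g)m_P(f)$, and the set of common first order infinitely near points does not depend on the order of the curves; one then closes the recursive term by a trivial blowup induction.

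Properties (2) and (5) I would prove together by blowup induction (Remark~\ref{rem:blowup_induction}), taking care to establish (2) first, since (5) will rely on it. One direction of (2) is the second branch of the definition; for the converse, if $P\in V(f)\cap V(g)$ and the curves share no component, we are in the third branch, where $m_P(f)m_P(g)\ge 1$ and the sum over common infinitely near points is non-negative, forcing $\mathcal{B}_P(f,g)\ge 1>0$. Property (5) then follows from the same decomposition $\mathcal{B}_P(f,g)=m_P(f)m_P(g)+\sum_Q\mathcal{B}_Q(f',g')$: the sum runs over non-negative terms, so $\mathcal{B}_P(f,g)\ge m_P(f)m_P(g)$, with equality exactly when the sum is empty. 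By the correspondence recalled in Section~\ref{se:blowup}, the sum is empty precisely when $C_f$ and $C_g$ share no first order infinitely near point at $P$, that is, no common tangent line; and when the sum is non-empty, each surviving $Q$ lies on both strict transforms, so the inductively available property (2) gives $\mathcal{B}_Q(f',g')>0$ and the inequality is strict, which is exactly the transversality characterization demanded by (5).

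The proof is thus essentially a bookkeeping argument, since all the genuinely difficult identities have already been absorbed into Propositions~\ref{prop:aff_transf}, \ref{prop:product} and~\ref{prop:gen_indep}. I expect the only point demanding real attention to be the equality clause of property (5): one must verify both that an empty sum corresponds exactly to a transversal intersection and that a non-empty sum forces strict inequality, and the latter is where having property (2) in hand along the induction is indispensable.
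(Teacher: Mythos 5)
Your proposal is correct and follows essentially the same route as the paper: both invoke the uniqueness clause of Theorem-Definition~\ref{thm:fulton}, observe that properties (3), (6), (7) are exactly Propositions~\ref{prop:aff_transf}, \ref{prop:product} and~\ref{prop:gen_indep}, and dispatch (1), (2), (4), (5) directly from Definition~\ref{def:B} together with the termination guaranteed by Lemma~\ref{lem:blowup_stop}. Your write-up is in fact more detailed than the paper's (which simply asserts that (1), (2), (4), (5) follow from the definition), particularly in the careful treatment of the equality clause of property (5).
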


\begin{proof}
  It is the consequence of the proven propositions 
  and Theorem-Definition~\ref{thm:fulton}.
  Directly from the definition of $\mathcal{B}_P(f,g)$ follows, 
  that the computed number
  satisfies the properties (1), (2), (4) and (5) of Theorem~\ref{thm:fulton}. 
  The remaining properties were verified 
  in Propositions~\ref{prop:aff_transf},~\ref{prop:product},
  and \ref{prop:gen_indep}.
\end{proof}

\section{The Algorithm for computing the local intersection multiplicity}

We implemented our algorithm in Sage for curves given over $\Q$.
During the computations, we need to factorize an univariate polynomial
into linear factors. It might happen that the polynomial does not factor 
over the field we work in at the point, 
and so we make a suitable algebraic extension
to make the factorization possible.

\vskip0.4cm
\noindent
\begin{tabular}{ll}
  \hline
  {\sc Function:} & {\tt IntersectionMultiplicity }\\
  \hline
  {\sc Input:}    & $f,g\in k[x,y]$ representing two curves at $\A^2(k)$,\\
  {\sc Output:}   & intersection multiplicity of $f$ and $g$ 
                    in the point $(0,0)$.\\
\end{tabular}
\begin{enumerate}
  \item {\it // the intersection multiplicity is at least the product of the orders} \\
    $m_f$ := the degree of the lowest term on $f$\\
    $m_g$ := the degree of the lowest term on $g$\\
    $I$ := $m_f.m_g$
  \item
    {\it // take affine charts of the blowups of $f$ and $g$ in $(0,0)$}\\
    $f_1$ := $x^{-m_f}f(x,xy)$ \\
    $g_1$ := $x^{-m_g}g(x,xy)$ 
  \item
    {\it // find all infinitely near points shared by both curves} \\
    $roots := $ roots of $gcd(f_1(0,y),g_1(0,y))$; \\
    (if needed, make an algebraic extension of the base field
    so that $gcd(f_1(0,y),g_1(0,y))$ splits into linear factors;)
  \item
    {\it // run the algorithm for all shared infinitely near points} \\
    {\tt for} $r\in roots$: \\
      $I := I +$ {\tt IntersectionMultiplicity}$(f_1(x, y+r), g_1(x, y+r))$\\
    {\tt endfor}
  \item
    {\it // if the curves $f$ and $g$ are both tangent to $y$-axis at $(0,0)$}\\ 
    {\tt if} $x$ divides the lowest forms of both $f$ and $g$ {\tt then} \\
      {\it // take the other affine charts of the blowup}\\
      $f_1$ := $y^{-m_f}f(xy,y)$ \\
      $g_1$ := $y^{-m_g}g(xy,y)$ \\
      $I := I +$ {\tt IntersectionMultiplicity}$(f_1(x, y), g_1(x, y))$\\
    {\tt end if};
  \item 
    {\tt return} $I$.
\hrule
\end{enumerate}

\section{Examples}

\subsection{Circle and ellipse}
Let $C_f$ be the ellipse given by 
$$f = 5x^2 + 6xy + 5y^2 - 10y$$
and $C_g$ be the circle given by 
$$g = x^2 + (y-1)^2 - 1.$$
In the exposition and in the figures,
we denote 
the polynomial describing the strict transform of $C_f$ resp. $C_g$ 
after the $i$-th blowup by $f_i$ resp. $g_i$. 
The restriction of the blowup morphism 
to an affine chart is denoted by $\pi$.
The intersection multiplicity of $C_f$ and $C_g$ in $(0,0)$ 
is found after performing three consecutive blowups, 
see Figure~\ref{fig:1}.

Firstly, both curves are regular in $(0,0)$, so each has only one
infinitely near point of the first order at $(0,0)$. 
We find them by computing the strict transforms of $f$ and $g$
\begin{eqnarray*}
  f_1 &=& 5x + 6xy + 5xy^2- 10y,\\
  g_1 &=& x + xy^2 - 2y.
\end{eqnarray*}
The infinitely near points are the intersections of the strict 
transform with $y$-axis. We see that both curves intersect 
the $y$-axis in the point $(0,0)$. Hence 
$$I_{(0,0)}(f,g) = 1.1 + I_{(0,0)}(f_1,g_1).$$

The point $(0,0)$ is again regular for both $f_1$ and $g_1$.
After the second blowup, we have
\begin{eqnarray*}
  f_2 &=& 5 + 6xy + 5x^2y^2 - 10y,\\
  g_2 &=& 1 + x^2y^2 - 2y
\end{eqnarray*}
and we see that they share the first order
infinitely near point, $(0,1/2)$. So
$$I_{(0,0)}(f_1,g_1) = 1.1 + I_{(0,1/2)}(f_2,g_2).$$

The point $(0,1/2)$ is regular for both $f_2$ and $g_2$
and the curves intersect transversally there.
We detect this in the algorithm after blowing up the curves in the point 
(before doing so we shift the point $(0,1/2)$ to $(0,0)$)
and checking that the strict transforms
\begin{eqnarray*}
  f_3 &=& 5x(xy+1/2)^2 + 6xy + 3 + 5x(xy+1/2)^2 - 10y,\\
  g_3 &=& x(xy+1/2)^2 - 2y
\end{eqnarray*}
have no common point on $y$-axis. 
Therefore
$$I_{(0,1/2)}(f_2,g_2) = 1.1.$$

After summing up, 
the intersection multiplicity of $C_f$ and $C_g$ in $(0,0)$ 
is equal to 3.

\begin{figure}[htb]
  \begin{center}
  \includegraphics[scale=1]{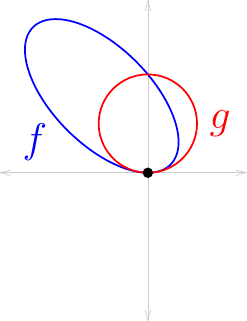}
  \includegraphics[scale=1]{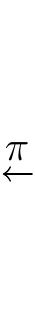}
  \includegraphics[scale=1]{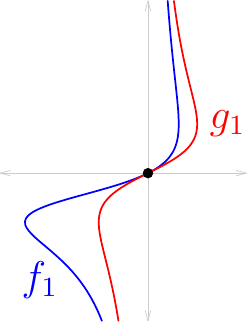}
  \includegraphics[scale=1]{pi_m30_35.pdf}
  \includegraphics[scale=1]{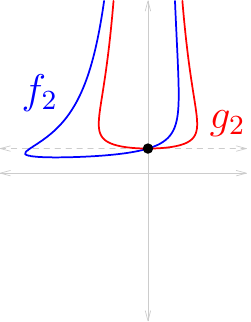}
  \includegraphics[scale=1]{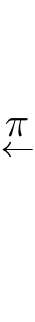}
  \includegraphics[scale=1]{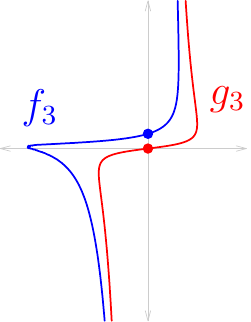}
  \end{center}
  \caption{Computing intersection multiplicity 
  of circle and ellipse in $(0,0)$.}
  \label{fig:1}
\end{figure}

\subsection{Tacnode and ramphoid cusp}

Let $C_f$ be the tacnode curve given by 
$$f = 2x^4 - 3x^2y + y^2 - 2y^3 + y^4$$
and $C_g$ be the curve 
$$g = (x/2)^4 + (x/2)^2y^2 - 2(x/2)^2y - (x/2)y^2 + y^2$$
(the ramphoid cusp).
Again, after performing three consecutive blowups, 
the intersection multiplicity of the curves in $(0,0)$ is found, 
see Figure~\ref{fig:2}.

In this case, the point of intersection 
has multiplicity 2 for both curves. 
Both they have only one first order infinitely near point at $(0,0)$, 
namely $(0,0)$, therefore 
$$I_{(0,0)}(f,g) = 2.2 + I_{(0,0)}(f_1,g_1).$$

For curves $f_1$ and $g_1$, the point $(0,0)$ is again of
multiplicity 2 in both cases. After blowing up in $(0,0)$
we see that the curve $f_1$ has at $(0,0)$ 
two first order infinitely near points: $(0,1)$ and $(0,2)$.
Out of them only $(0,1)$ is shared with $g$, so
$$I_{(0,0)}(f_1,g_1) = 2.2 + I_{(0,1)}(f_2,g_2).$$

Now the curves $f_2$ and $g_2$ are both regular at $(0,1)$
and they intersect transversally (i.e. share no infinitely near point),
so $$I_{(0,1)}(f_2,g_2) = 1.1.$$

Summing up we see that $I_{(0,0)}(f,g) = 9$.

\begin{figure}[htb]
  \begin{center}
  \includegraphics[scale=1]{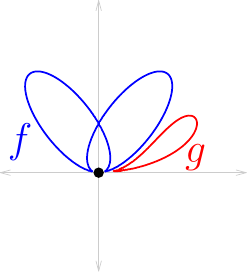}
  \includegraphics[scale=1]{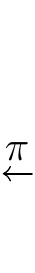}
  \includegraphics[scale=1]{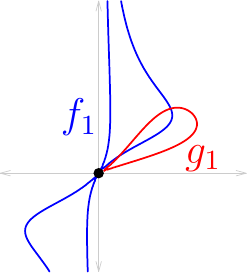}
  \includegraphics[scale=1]{pi_m20_35.pdf}
  \includegraphics[scale=1]{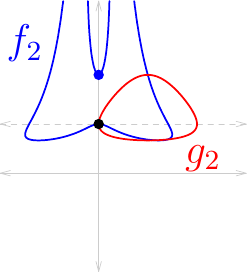}
  \includegraphics[scale=1]{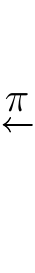}
  \includegraphics[scale=1]{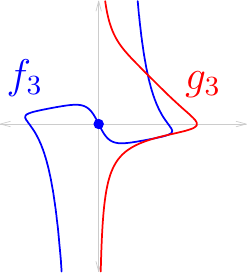}
  \end{center}
  \caption{Computing intersection multiplicity 
  of tacnode and ramphoid cusp in $(0,0)$.}
  \label{fig:2}
\end{figure}

\subsection{Lemniscata of Bernoulli and the four-leaves-curve}

Here, the lemniscata is given by 
$$f = (x^2 + y^2)^2 - (x^2 - y^2)$$
and the four-leaves-curve is given by 
$$g = (x^2+y^2)^3 - (x^2-y^2)^2.$$

In this case the computation of the intersection multiplicity of 
the two curves in $(0,0)$ proceeds
\begin{eqnarray*}
  I_{(0,0)}(f,g) &=& 2.4 + I_{(0,1)}(f_1,g_1) + I_{(0,-1)}(f_1,g_1)\\
  &=& 8 + 1.2 + 1.2 \\
  &=& 12,
\end{eqnarray*}
see Figure~\ref{fig:3}.

\begin{figure}[htb]
  \begin{center}
  \includegraphics[scale=1]{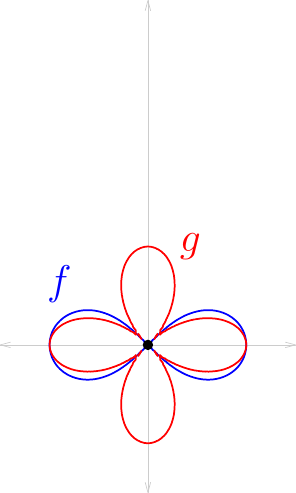}
  \includegraphics[scale=1]{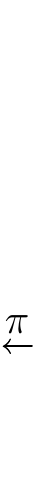}
  \includegraphics[scale=1]{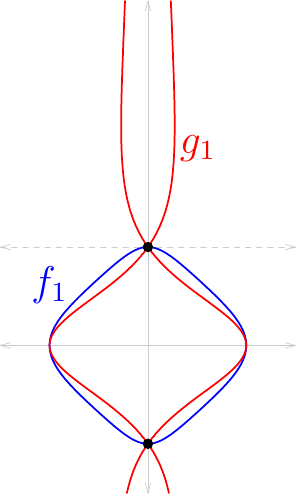}
  \includegraphics[scale=1]{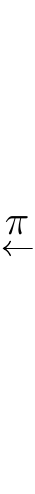}
  \includegraphics[scale=1]{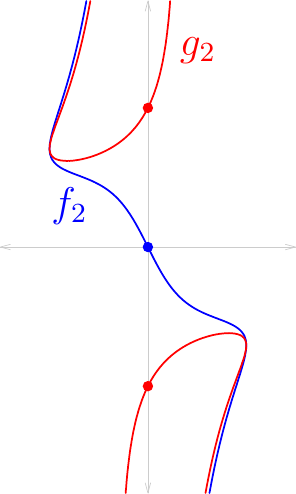}
  \end{center}
  \caption{Computing intersection multiplicity 
  of Bernoulli lemniscata and four-leaves-curve in $(0,0)$.}
  \label{fig:3}
\end{figure}

\section{Performance}
  For comparison we implemented also the algorithm derived from
  axioms for the intersection number given by Fulton.
  When computing the intersection multiplicity of two curves
  in a given point using Fulton's axioms of the intersection
  number, each step in the algorithm in relatively
  simple. The main operation there is finding new generators
  of the ideal $(f,g)$, $f,g\in k[x,y]$, which actually leads to
  a polynomial division with respect to the lexicographic ordering. 
  The drawback of this approach is that the degrees of the polynomials 
  are raising in each step. There are a lot of steps to be executed
  during the computation and there is actually no control
  of their number.

  When computing the intersection multiplicity using blowup,
  we have to
  \begin{itemize}
    \item[--] construct the strict transforms $f_{i+1}$ and $g_{i+1}$ 
      of $f_i$ and $g_i$,
    \item[--] find infinitely near points $Q_j$ shared by $f_{i+1}$ and $g_{i+1}$,
    \item[--] move each $Q_j$ to $(0,0)$.
  \end{itemize}
  in each step. We know in advance that the $i$-th step deals with polynomials 
  of degree $\mathcal{O}(i(\deg f + \deg g))$.
  We also have the upper bound for the number of steps, namely
  there are at most $\deg f\cdot\deg g$ steps executed during the computation.
  This is reflected in much better timing.

  We implemented the algorithm in SageMath 8.3~(\cite{sagemath}) 
  and run it on processor x86\_64, Intel(R) Core(TM) i3-3110M CPU @ 2.40GHz
  with memory 3.7 GiB.

  In Table~\ref{tab:1}, the timings are given for both algorithms.
  They are obtained as the average of 
  10 randomly generated pairs of curves with a given degree 
  and passing through $(0,0)$ with a given multiplicity.
  The intersection number is computed in $(0,0)$.
  The coefficients of the polynomials defining the curves
  are randomly generated integers from $-10$ to $10$.

  \begin{table}
    \begin{center}
      \begin{tabular}{rrrrrrl}
        & deg & m & $I(f,g)$ & axioms & blowup & comment\\
        \hline
         1. & 4   & 1  &  1 & 5.915 ms  & 1.452 ms & transversal intersection \\
         2. & 8   & 1  &  1 & 10.282 ms  & 1.823 ms & transversal intersection \\
         3. & 5   & 2  &  4 & 17.76 ms  & 1.169 ms & transversal intersection \\
         4. & 5   & 3  &  9 & 3407.24 ms  & 1.08 ms & transversal intersection \\
         5. & 6   & 3  &  9 & --        & 1.412 ms & transversal intersection \\
         6. & 15  & 4  & 16 & --        & 4.096 ms & transversal intersection \\
         7. & 5   & 3  & 10 & 346.75 ms & 2.531 ms & a tangent in common \\
         8. & 5   & 3  & 11 & 113.39 ms & 3.372 ms & a double tangent in common \\
         9. & 5   & 2  &  8 & 140.65 ms & 8.107 ms & two tangents in common \\
        10. & 5   & 3  & 13,15 & 186.88 ms & 6.113 ms & two tangents in common \\
        11. & 6   & 3  & 13 & --        & 6.351 ms & two tangents in common \\
        12. & 15  & 3  & 13 & --        & 24.05 ms & two tangents in common \\
        13. & 5   & 2  & 8  & 96.55 ms  & 33.23 ms & tangent cone $x^2+y^2$ in common\\
        \hline
      \end{tabular}
      \caption{``deg'' -- degree of the curves,
        ``m'' -- multiplicity of $(0,0)$ on the curves,
        ``$I(f,g)$'' -- the intersection multiplicity of the curves in $(0,0)$,
         ``axioms'' -- time needed to compute the intersection multiplicity
         using the definition by Fulton,
         ``blowup'' -- time needed to compute the intersection multiplicity
         via blowup,
         ``--'' -- the algorithm did not finish in reasonable time}
      \label{tab:1}  
    \end{center}
  \end{table}

  In some cases (indicated in the table) the algorithm using
  Fulton's axioms for the intersection number did not finish.
  The last row in the table represents the situation where 
  the extension of the field of rationals had to be constructed.
  Apparently this slowed the computation using blowup significantly
  (compare to the 9th row).


\section*{Acknowledgment}
  This work was supported by the Slovak Research and Development Agency 
  under the contract No.~APVV-16-0053.




\bibliographystyle{alpha}
\bibliography{im}





\end{document}